\title[Generic geometrically elliptic normal curves]{The period and index of a generic geometrically elliptic normal curve}
\author{Eoin Mackall}
\email{eoinmackall \emph{at} gmail.com}
\urladdr{\url{www.eoinmackall.com}}
\date{October 27, 2025}
\keywords{Hilbert schemes; division algebras}
\subjclass{14H45; 14C05; 16K20}
\newtheorem{thm}{Theorem}[section]
\newtheorem{prop}[thm]{Proposition}
\newtheorem{cor}[thm]{Corollary}
\newtheorem{lem}[thm]{Lemma}
\theoremstyle{definition}
\newtheorem{defn}[thm]{Definition}
\newtheorem{exmp}[thm]{Example}
\newtheorem{rmk}[thm]{Remark}
\newcounter{item}
\newcommand{\HH}{\mathrm{H}}
\newcommand{\CH}{\mathrm{CH}}
\newcommand{\SB}{\mathbf{SB}}
\begin{document}
	\begin{abstract}
	We construct genus one curves on base extensions of generic Severi--Brauer varieties of a given index and period which are versal objects for families of geometrically elliptic normal curves. We also compute the periods and indices of these curves showing that all possible period/index combinations are possible.
	\end{abstract}
	\maketitle
	\tableofcontents
	
	\section{Introduction}
	In \cite{MR0106226}, Lang and Tate introduced the notion of the period and index of a principal homogeneous space for an abelian variety in analogy to invariants of the same name for central simple algebras. They prove that, here also, the period divides the index, that both invariants have the same prime factors, and they construct examples showing some of the possible period-index combinations that can occur.
	
	Lichtenbaum showed in \cite{MR0242831} that for principal homogeneous spaces under elliptic curves, the index divides the period squared. Since then, there have been a number of constructions of genus one curves having period $n$ and index $nm$, for any $m$ dividing $n$, for varying base fields. For instance, Clark and Lacy \cite{MR3935899} have proven that such curves exist over any infinite, finitely generated field and for any $n\geq 1$.
	
	In this article, we show that there exists a generic, geometric such construction of a curve of genus one with period $n\geq 3$ and index $nm$, for any $m$ dividing $n$, assuming that $n$ is indivisible by the characteristic of the base field. More precisely, we compute (in Theorem \ref{thm: perind}) the period and index for the generic geometrically elliptic normal curve on a generic Severi--Brauer variety of index $n$ and exponent $m$.
	
	Our proof has two components. First, we observe that the generic geometrically elliptic normal curve embedded inside the generic Severi--Brauer variety of index $n$ and exponent $m$ is \textit{versal} among all curves embedded as a geometrically elliptic normal curve in a Severi--Brauer variety of degree $n$ and exponent dividing $m$. More precisely, we can realize any particular such embedded curve as a specialization of the generic curve along a sequence of DVR's.
	
	Second, the period and index of any geometrically elliptic normal curve embedded in a Severi--Brauer variety of degree $n$ and exponent $m$ must be bounded above by $n$ and $nm$ respectively. Since the period and index can only lower under specialization, the main difficulty is in showing that these upper bounds are also lower bounds. To do this, we use the idea of \textit{index reduction for curves} to produce a particular example which the generic curve specializes to (Lemma \ref{lem: indcurve}).
	
	Our construction of the generic, geometric elliptic normal curve inside a generic Severi--Brauer variety, along with the verification of its properties, uses a particular \textit{twisted Hilbert scheme} of a \textit{Severi--Brauer scheme}. Conceptually, it is easier to understand this twisted Hilbert scheme as an fppf-descended collection of Hilbert schemes of projective bundles. We explain how this descent can be accomplished in Section \ref{sec: descent} and we summarize the basic properties of these schemes that we will use. This section subsumes an earlier preprint of the author titled \textit{Twisted Hilbert schemes and division algebras}. Then, in Section \ref{sec: ggenc}, we give our construction of generic geometrically elliptic normal curves along with the main results of the paper.\\
		
	\noindent\textbf{Notation}. We use the following notation throughout:
	\begin{itemize}[leftmargin=*]
		\item if $k$ is a base field, then we write $\overline{k}$ to denote a fixed algebraic closure of $k$ and $k^s$ to denote the separable closure of $k$ inside $\overline{k}$.
	\end{itemize}
	
	\noindent\textbf{Conventions}. We use the following conventions throughout:
	\begin{itemize}[leftmargin=*]
		\item a variety is an integral scheme that is separated and of finite type over a base field,
		\item a curve is a proper scheme of pure dimension one that is separated and of finite type over a base field.\\
	\end{itemize}
	
	\noindent\textbf{Acknowledgments}. I'd like to thank both Nitin Chidambaram and Priyankur Chaudhuri for our frequent meetings discussing the Hilbert scheme where I learned most of the techniques contained in this paper. I'd also like to thank Patrick Brosnan for stimulating conversations that gave me both the ideas and motivation needed to start this work.
	
	\section{Twisted Hilbert Schemes}\label{sec: descent}
	Let $\mathscr{X}/S$ be a Severi--Brauer scheme of relative dimension $n$ over a Noetherian scheme $S$. Concretely, this means there exists an fppf cover $S'=\{S_i\}_{i\in I}$ of $S$ and compatible isomorphisms $\mathscr{X}_{S_i}=\mathscr{X}\times_S S_i\cong \mathbb{P}^n_{S_i}$. We call data $(S_i,\epsilon_i)_{i\in I}$ consisting of an fppf cover $S'$ and isomorphisms $\epsilon_i:\mathscr{X}_{S_i}\rightarrow \mathbb{P}^n_{S_i}$ a \textit{splitting of $\mathscr{X}/S$}.
	
	Given both splitting data $(S_i,\epsilon_i)_{i\in I}$ for a Severi--Brauer scheme $\mathscr{X}/S$ and a polynomial $\phi(t)\in \mathbb{Q}[t]$, one gets Hilbert schemes $\mathbf{Hilb}_{\phi(t)}(\mathbb{P}^n_{S_i}/S_i)$, see for example \cite[Chapter I, I.1]{MR1440180}, and one also gets an induced fppf descent datum relative to the cover $\{S_i\}_{i\in I}$ of $S$. The goal of this section is to show that this descent data is effective, coming naturally from an $S$-scheme $\mathbf{Hilb}^{\text{tw}}_{\phi(t)}(\mathscr{X}/S)$ which represents a functor analogous to the usual Hilbert scheme of a projective bundle.

	To start, recall from \cite[\S8.4]{MR0338129} that Quillen has constructed a universal vector bundle $\mathcal{J}$ on the Severi--Brauer scheme $\mathscr{X}/S$ having the following property: locally for an fppf cover $S'/S$ splitting $\mathscr{X}/S$, $\mathcal{J}$ admits isomorphisms \begin{equation*}\mathcal{J}|_{S_i}\cong \mathcal{O}_{\mathbb{P}^n_{S_i}}(-1)^{\oplus n+1}\quad \mbox{for each } S_i\in S'
	\end{equation*}
	compatible with the isomorphisms $\mathscr{X}_{S_i}\cong \mathbb{P}^n_{S_i}$ of the splitting. We write $\mathcal{Q}=\mathcal{J}^\vee=\mathcal{H}om(\mathcal{J},\mathcal{O}_{\mathscr{X}})$ to denote the dual of $\mathcal{J}$ and we call $\mathcal{Q}$ the \textit{Quillen bundle} on the Severi--Brauer scheme $\mathscr{X}/S$.
	
	\begin{lem}\label{lem: redh}
		Suppose that $S$ is connected and write $\pi:\mathscr{X}\rightarrow S$ for the structure map of $\mathscr{X}/S$. Let $\mathcal{F}$ be an $S$-flat coherent sheaf on $\mathscr{X}$. Then there exists a numerical polynomial $\phi(t)\in \mathbb{Q}[t]$ and an integer $N$ so that the following equality holds \begin{equation*} \mathrm{rk}(\pi_*(\mathcal{F}\otimes \mathcal{Q}^{\otimes r}))=\phi(r)\cdot \mathrm{rk}(Q^{\otimes r})
		\end{equation*}
		for all integers $r\geq N$.
	\end{lem}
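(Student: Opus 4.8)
The plan is to reduce the entire statement to the split case on the \'etale cover $S'/S$ and then invoke the standard theory of cohomology and base change for flat families over $\mathbb{P}^n$. The key observations are that rank is an \'etale-local invariant and that the Quillen bundle becomes a sum of copies of $\mathcal{O}(1)$ after splitting.

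First I would record the local shape of $\mathcal{Q}^{\otimes t}$. By Quillen's construction $\mathcal{Q}|_{S_i}\cong \mathcal{O}_{\mathbb{P}^n_{S_i}}(1)^{\oplus n+1}$, and since tensor product distributes over finite direct sums, the $t$-fold tensor power satisfies $\mathcal{Q}^{\otimes t}|_{S_i}\cong \mathcal{O}_{\mathbb{P}^n_{S_i}}(t)^{\oplus (n+1)^t}$; in particular $\mathcal{Q}^{\otimes t}$ is a vector bundle of rank $(n+1)^t$ on $\mathscr{X}$. Consequently $\mathcal{F}\otimes\mathcal{Q}^{\otimes t}$ restricts on $S_i$ to $(\mathcal{F}_{S_i}(t))^{\oplus (n+1)^t}$, and because $\pi$ is proper, flat base change along the \'etale map $S_i\to S$ gives $\pi_*(\mathcal{F}\otimes\mathcal{Q}^{\otimes t})|_{S_i}\cong \bigl(\pi_{S_i,*}(\mathcal{F}_{S_i}(t))\bigr)^{\oplus (n+1)^t}$.

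Next I would apply Serre vanishing and the cohomology-and-base-change theorem on each $\mathbb{P}^n_{S_i}$. Since $\mathcal{F}_{S_i}$ is coherent and flat over $S_i$, there is an integer $N_i$ so that for $t\geq N_i$ one has $R^q\pi_{S_i,*}(\mathcal{F}_{S_i}(t))=0$ for $q>0$, and $\pi_{S_i,*}(\mathcal{F}_{S_i}(t))$ is locally free with fiber rank at $s$ equal to $\chi(\mathcal{F}_s(t))$, the Hilbert polynomial of the geometric fiber evaluated at $t$. As $S$ is Noetherian the cover may be taken finite, so $N=\max_i N_i$ works uniformly; local freeness and rank then descend from the cover, so $\pi_*(\mathcal{F}\otimes\mathcal{Q}^{\otimes t})$ is locally free on $S$ for $t\geq N$ with rank equal to $(n+1)^t$ times the fiberwise Hilbert polynomial.

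The one genuinely global input is the passage from a locally constant fiberwise Hilbert polynomial to a single polynomial $\phi(t)$, and this is exactly where connectedness of $S$ enters. For each fixed $t$, flatness of $\mathcal{F}\otimes\mathcal{Q}^{\otimes t}$ over $S$ makes $s\mapsto \chi(\mathscr{X}_s,(\mathcal{F}\otimes\mathcal{Q}^{\otimes t})_s)$ locally constant, hence constant on the connected scheme $S$; letting $\phi(t)$ denote the common value of the fiberwise Hilbert polynomial, I conclude $\mathrm{rk}(\pi_*(\mathcal{F}\otimes\mathcal{Q}^{\otimes t}))=(n+1)^t\cdot\phi(t)=\phi(t)\cdot\mathrm{rk}(\mathcal{Q}^{\otimes t})$ for all $t\geq N$. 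That $\phi(t)$ is numerical follows because, for $t\geq N$, it agrees with the Hilbert polynomial of a coherent sheaf on $\mathbb{P}^n$ over a field. I expect the only points demanding care to be the uniformity of $N$ across the cover, handled by quasi-compactness, and the bookkeeping identifying the fiberwise rank with a single Hilbert polynomial via connectedness; the remainder is the standard projective-bundle argument applied summand by summand.
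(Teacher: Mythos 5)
Your proof is correct and follows essentially the same route as the paper: restrict to a splitting \'etale cover, use $\mathcal{Q}^{\otimes t}|_{S_i}\cong \mathcal{O}_{\mathbb{P}^n_{S_i}}(t)^{\oplus (n+1)^t}$ together with flat base change to identify $\pi_*(\mathcal{F}\otimes\mathcal{Q}^{\otimes t})|_{S_i}$ with $\pi_{i*}(\mathcal{F}|_{S_i}(t))^{\oplus (n+1)^t}$, and conclude that $\phi(t)$ is the (constant, by connectedness of $S$) Hilbert polynomial of $\mathcal{F}|_{S_i}$. You supply more detail than the paper does on Serre vanishing, local freeness of the pushforward, and the uniformity of $N$, all of which the paper leaves implicit.
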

	
	\begin{proof}
		Let $S'=\{S_i\}_{i\in I}$ be an fppf cover splitting $\mathscr{X}/S$ and write $\pi_i:\mathscr{X}_{S_i}\rightarrow S_i$ for map coming from base change. Then, for all $r\geq 1$, there are isomorphisms \[\pi_*(\mathcal{F}\otimes \mathcal{Q}^{\otimes r})|_{S_i} \cong \pi_{i*}(\mathcal{F}|_{S_i}\otimes (\mathcal{O}_{\mathbb{P}^n_{S_i}}(1)^{\oplus n+1})^{\otimes r})\cong \pi_{i*}(\mathcal{F}|_{S_i}(r)^{\oplus (n+1)^r}).\] Since $\pi_{i*}(\mathcal{F}|_{S_i}(r)^{\oplus (n+1)^r})\cong\pi_{i*}(\mathcal{F}|_{S_i}(r))^{\oplus (n+1)^r}$, the $\phi(t)$ of the lemma is necessarily the Hilbert polynomial of $\mathcal{F}|_{S_i}$ on $\mathscr{X}_{S_i}\cong \mathbb{P}^n_{S_i}$.
	\end{proof}
	
	\begin{defn}
		Let $\mathscr{X}/S$ be a Severi--Brauer scheme over a base $S$. Let $\mathcal{F}$ be an $S$-flat coherent sheaf on $\mathscr{X}$. For each connected component $S_\rho\subset S$ we define the \textit{reduced Hilbert polynomial of $\mathcal{F}$ on $S_\rho$} to be the numerical polynomial $\mathrm{rh}^\rho_{\mathcal{F}}(t)\in \mathbb{Q}[t]$ guaranteed to exist by Lemma \ref{lem: redh}. In other words, $\mathrm{rh}^\rho_\mathcal{F}(t)$ is uniquely characterized by the existence of an integer $N\geq 0$ and equality \[\mathrm{rk}(\pi_*(\mathcal{F}\otimes \mathcal{Q}^{\otimes t})|_{S_{\rho}})=\mathrm{rh}^\rho_{\mathcal{F}}(t)\cdot \mathrm{rk}(Q^{\otimes t})\quad \mbox{for all $t\geq N$.}\] If the reduced Hilbert polynomial of $\mathcal{F}$ is identical over all connected components $S_\rho\subset S$, then we simply write $\mathrm{rh}_{\mathcal{F}}(t)$ for this polynomial and call it the \textit{reduced Hilbert polynomial of $\mathcal{F}$}.
		
Also, in the special case when $\mathcal{F}=\mathcal{O}_V$ is the structure sheaf of a subscheme $V\subset \mathscr{X}$, we write $\mathrm{rh}_V(t)$ instead of $\mathrm{rh}_{\mathcal{O}_V}(t)$.
	\end{defn}
	
	\begin{rmk}\label{rem: rhilbsplit}
		If $\mathscr{X}/S$ is a split Severi--Brauer scheme (i.e.\ if $\mathscr{X}/S$ is isomorphic over $S$ with a projective bundle $\mathbb{P}_S(\mathcal{E})$ for some vector bundle $\mathcal{E}$ on $S$) then, for any $S$-flat coherent sheaf $\mathcal{F}$ on $\mathscr{X}$, the reduced Hilbert polynomial $\mathrm{rh}_{\mathcal{F}}(t)$ is just the usual Hilbert polynomial $h_{\mathcal{F}}(t)$ with respect to the line bundle $\mathcal{O}_{\mathbb{P}_S(\mathcal{E})}(1)$.
	\end{rmk}
	
	\begin{lem}\label{lem: exist}
		Let $\mathscr{X}/S$ be a Severi--Brauer scheme over any scheme $S$. Let $\mathcal{F}$ be a coherent sheaf on $\mathscr{X}$. Then for every polynomial $\phi(t)\in \mathbb{Q}[t]$ there is a locally closed subscheme $S_{\phi(t)}\subset S$ with the property:
		\begin{itemize}[leftmargin=*, label=\normalfont{(\textbf{f})}]\label{propf}
			\item given a morphism $T\rightarrow S$, the pullback $\mathcal{F}_T$ on $\mathscr{X}_T$ is flat over $T$ with reduced Hilbert polynomial $\mathrm{rh}_{\mathcal{F}_T}(t)=\phi(t)$ if and only if $T\rightarrow S$ factors $T\rightarrow S_{\phi(t)}\subset S$.
		\end{itemize}
	\end{lem}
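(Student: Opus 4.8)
The statement is the Severi--Brauer analogue of the classical flattening stratification theorem for a coherent sheaf on a projective bundle, and the plan is to deduce it from that classical result by étale descent. The essential observation is that property (\textbf{f}) is phrased entirely in terms of flatness and the reduced Hilbert polynomial $\mathrm{rh}_{\mathcal{F}_T}(t)$, both of which are intrinsic to $\mathscr{X}/S$ and insensitive to the choice of splitting: flatness is an étale-local condition on the base, while $\mathrm{rh}_{\mathcal{F}_T}(t)$ is computed from the Quillen bundle $\mathcal{Q}$, which is defined without reference to any splitting. Since the assertion is étale-local on $S$, I would first fix a splitting $S'=\{S_i\}_{i\in I}$ with isomorphisms $\mathscr{X}_{S_i}\cong \mathbb{P}^n_{S_i}$ and reduce to constructing the strata over each $S_i$ together with a compatibility on overlaps.

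Over a fixed $S_i$ the sheaf $\mathcal{F}|_{S_i}$ is a coherent sheaf on the projective bundle $\mathbb{P}^n_{S_i}$, so the classical flattening stratification theorem (see \cite{MR1440180}) produces a locally closed subscheme $(S_i)_{\phi(t)}\subseteq S_i$ characterized by the universal property that a morphism $T\to S_i$ has flat pullback $\mathcal{F}_T$ with usual Hilbert polynomial $\phi(t)$, computed with respect to $\mathcal{O}_{\mathbb{P}^n_{S_i}}(1)$, if and only if it factors through $(S_i)_{\phi(t)}$. By Remark \ref{rem: rhilbsplit} the usual Hilbert polynomial over the split bundle $\mathbb{P}^n_{S_i}$ agrees with the reduced Hilbert polynomial $\mathrm{rh}$, so $(S_i)_{\phi(t)}$ already satisfies property (\textbf{f}) for $\mathscr{X}_{S_i}/S_i$; for polynomials $\phi(t)$ that never occur one simply gets $(S_i)_{\phi(t)}=\emptyset$, and the universal property holds vacuously.

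The remaining task is to glue the $(S_i)_{\phi(t)}$ into a single locally closed subscheme $S_{\phi(t)}\subseteq S$. Here I would use that a locally closed subscheme representing the subfunctor cut out by (\textbf{f}) is unique: two locally closed immersions into a scheme representing the same subfunctor are identified by a unique isomorphism over the base and, being monomorphisms, coincide as subschemes. On an overlap $S_{ij}=S_i\times_S S_j$, both $(S_i)_{\phi(t)}\times_{S_i}S_{ij}$ and $(S_j)_{\phi(t)}\times_{S_j}S_{ij}$ satisfy (\textbf{f}) for the split scheme $\mathscr{X}_{S_{ij}}$, using the base-change compatibility of the flattening stratification together with the intrinsicness of $\mathrm{rh}$ noted above, so by uniqueness they agree. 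This furnishes a descent datum for a locally closed immersion along the étale cover $S'/S$, which descends effectively to a locally closed subscheme $S_{\phi(t)}\subseteq S$. Finally, property (\textbf{f}) for $S_{\phi(t)}$ is checked by pulling a given $T\to S$ back to the cover $\{T\times_S S_i\}_i$: flatness and the reduced Hilbert polynomial descend along this cover, so $\mathcal{F}_T$ is flat with $\mathrm{rh}_{\mathcal{F}_T}(t)=\phi(t)$ if and only if each $T\times_S S_i\to S_i$ factors through $(S_i)_{\phi(t)}$, which in turn is equivalent to $T\to S$ factoring through $S_{\phi(t)}$.

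I expect the main obstacle to be the gluing step, and specifically the verification that the locally constructed strata are compatible on overlaps. This is where the intrinsic characterization of the reduced Hilbert polynomial does the real work: a priori the two splittings of $\mathscr{X}_{S_{ij}}$ differ by a section of $\mathrm{PGL}_{n+1}$, so one must know that the two resulting notions of Hilbert polynomial coincide, and this follows precisely because $\mathrm{rh}$ is defined through the splitting-independent Quillen bundle $\mathcal{Q}$, equivalently because automorphisms of $\mathbb{P}^n$ preserve $\mathcal{O}(1)$ and hence Hilbert polynomials. The only other point requiring care is effective descent for locally closed immersions along the étale cover, which is standard in the Noetherian setting in force here.
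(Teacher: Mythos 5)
Your proposal is correct and follows essentially the same route as the paper: reduce étale-locally to the classical flattening stratification over each $S_i$ via Remark \ref{rem: rhilbsplit}, use the uniqueness of the stratum as the subscheme representing property (\textbf{f}) to get agreement on overlaps, and descend, checking (\textbf{f}) étale-locally at the end. The point you flag as the main obstacle — splitting-independence of the Hilbert polynomial on overlaps — is exactly the one the paper handles by characterizing both pullback strata through property (\textbf{f}) for the single sheaf $\mathcal{F}_i|_{S_i\times_S S_j}\cong\mathcal{F}_j|_{S_i\times_S S_j}$.
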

	
	\begin{proof}
		The lemma holds fppf locally over the base $S$. More precisely, let $S'=\{S_i\}_{i\in I}$ be any fppf cover splitting $\mathscr{X}/S$, with $I$ a finite set, and let $\epsilon_i:\mathscr{X}_{S_i}\rightarrow \mathbb{P}^n_{S_i}$ be isomorphisms realizing the splitting. Write $T_i=T\times_S S_i$ and $\mathcal{F}_{i}$ for the pullback of $\mathcal{F}$ to $\mathscr{X}_{T_i}$. Then for each of the indices $i\in I$, there exists a locally closed subscheme $S_{i,\phi(t)}\subset S_i$ with the property that $\mathcal{F}_i$ is flat over $T_i$ with reduced Hilbert polynomial $\mathrm{rh}_{\mathcal{F}_i}(t)=\phi(t)$ if and only if $T_i\rightarrow S_i$ factors $T_i\rightarrow S_{i,\phi(t)}\subset S_i$.  Indeed, because of Remark \ref{rem: rhilbsplit}, the reduced Hilbert polynomial $\mathrm{rh}_{\mathcal{F}_i}(t)$ is just the Hilbert polynomial $h_{\epsilon_{i*}\mathcal{F}_i}(t)$; this existence is then \cite[Theorem I.1.6]{MR1440180} whose proof is ultimately deferred to \cite[Lecture 8]{MR0209285}.
		
		To see that the lemma also holds over $S$, we note that it's possible to descend the $S_{i,\phi(t)}$ to a scheme $S_{\phi(t)}\subset S$ with $S_{\phi(t)}\times_S S_i=S_{i,\phi(t)}$. Indeed, both of the schemes $S_{i,\phi(t)}\times_S S_j$ and $S_{j,\phi(t)}\times_S S_i$ are uniquely characterized as subschemes of $S_i\times_S S_j$ by the given property with respect to the coherent sheaf $\mathcal{F}_i|_{S_i\times_S S_j}\cong \mathcal{F}_j|_{S_i\times_S S_j}$ on $\mathscr{X}_{S_i\times_S S_j}$. As it's clear that the cocycle condition on any triple product $S_i\times_S S_j\times_S S_k$ is satisfied, it follows from \cite[\href{https://stacks.math.columbia.edu/tag/0247}{Tag 0247}]{stacks-project} that $S_{\phi(t)}$ exists as a scheme over $S$ (see also \cite[\href{https://stacks.math.columbia.edu/tag/01OX}{Tag 01OX},\href{https://stacks.math.columbia.edu/tag/02JR}{Tag 02JR}]{stacks-project}).
		
		It remains to show that $S_{\phi(t)}$ has property (\textbf{f}). Both the flatness of $\mathcal{F}_T$ and the computation for the reduced Hilbert polynomial $\mathrm{rh}_{\mathcal{F}_T}(t)$ can be checked fppf locally for the cover $S'/S$. The claim follows then from the construction of $S_{\phi(t)}$.
	\end{proof}
	
	For any locally Noetherian $S$-scheme $T$, write $H^{\phi(t)}_{\mathscr{X}/S}(T)$ for the set \begin{equation}\label{eq: func} H^{\phi(t)}_{\mathscr{X}/S}(T):=\left\{V\subset \mathscr{X}_T\middle\vert \begin{array}{c}
			V \text{ is proper and flat over } T\\
			\text{and } \mathrm{rh}_V(t)=\phi(t)
		\end{array}\right\}.
	\end{equation}
	The association of $T$ to $H^{\phi(t)}_{\mathscr{X}/S}(T)$ defines a contravariant functor from the category of locally Noetherian $S$-schemes to the category of sets. For a morphism $\rho:T'\rightarrow T$, the associated map $H^{\phi(t)}_{\mathscr{X}/S}(T)\rightarrow H^{\phi(t)}_{\mathscr{X}/S}(T')$ sends a subscheme $V\subset \mathscr{X}_T$ to $V\times_T T'\subset \mathscr{X}_{T'}$ where the fiber product is taken along the morphism $\rho$.
	
	\begin{thm}\label{thm: twhilb}
		Let $\mathscr{X}/S$ be a Severi--Brauer scheme over a Noetherian base scheme $S$. Then, for every polynomial $\phi(t)\in \mathbb{Q}[t]$, there exists an $S$-scheme $\mathbf{Hilb}_{\phi(t)}^{\mathrm{tw}}(\mathscr{X}/S)$ which represents the functor $H^{\phi(t)}_{\mathscr{X}/S}$ from \emph{(\ref{eq: func})}. 
		
		In particular, there is a subscheme \[\mathbf{Univ}^{\mathrm{tw}}_{\phi(t)}(\mathscr{X}/S)\subset \mathscr{X}\times_S \mathbf{Hilb}_{\phi(t)}^{\mathrm{tw}}(\mathscr{X}/S)\] and, for any locally Noetherian $S$-scheme $T$, there is an equality \[\mathrm{Hom}_S(T,\mathbf{Hilb}_{\phi(t)}^{\mathrm{tw}}(\mathscr{X}/S))=H^{\phi(t)}_{\mathscr{X}/S}(T)\] where a map $f:T\rightarrow \mathbf{Hilb}_{\phi(t)}^{\mathrm{tw}}(\mathscr{X}/S)$ corresponds to the subscheme \[V\cong \mathbf{Univ}^{\mathrm{tw}}_{\phi(t)}(\mathscr{X}/S)\times_{\mathscr{X}\times_S \mathbf{Hilb}_{\phi(t)}^{\mathrm{tw}}(\mathscr{X}/S)} \mathscr{X}\times_S T.\]
	\end{thm}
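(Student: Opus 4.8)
The plan is to adapt Grothendieck's construction of the Hilbert scheme, in the form recorded in \cite{MR1440180}, by systematically replacing the tautological line bundle $\mathcal{O}(1)$ of a projective bundle with the Quillen bundle $\mathcal{Q}$. Although $\mathscr{X}/S$ carries no relatively ample line bundle in general, the locally free sheaf $\mathcal{Q}$ is globally defined on $\mathscr{X}$ and, by Lemma \ref{lem: redh}, its powers $\mathcal{Q}^{\otimes t}$ play precisely the role of $\mathcal{O}(t)^{\oplus (n+1)^t}$ after passing to a splitting cover. Consequently the pushforwards $\pi_*(\mathcal{Q}^{\otimes m})$ are, for $m\gg 0$, locally free sheaves on $S$ (local freeness may be checked étale locally), and these will serve as the ambient objects for a relative Grassmannian into which the functor $H^{\phi(t)}_{\mathscr{X}/S}$ embeds. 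The upshot is that no separate descent argument is needed: the effectivity of descent, which would otherwise be the crux, is built into the fact that the Grassmannian below is globally defined over $S$.

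First I would establish a uniform boundedness statement: there is an integer $m_0$, depending only on $n$ and $\phi(t)$, so that for every locally Noetherian $T$, every $V\in H^{\phi(t)}_{\mathscr{X}/S}(T)$, and every $m\geq m_0$, the formation of $\pi_*(\mathcal{O}_V\otimes \mathcal{Q}^{\otimes m})$ commutes with base change on $T$, is locally free of rank $\phi(m)\cdot\mathrm{rk}(\mathcal{Q}^{\otimes m})$, and the resulting surjection $\pi_*(\mathcal{Q}^{\otimes m})_T\twoheadrightarrow \pi_*(\mathcal{O}_V\otimes \mathcal{Q}^{\otimes m})$ determines $V$ uniquely. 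Since all of these assertions can be checked after passing to an étale splitting $S'/S$, where $\mathcal{Q}$ becomes $\mathcal{O}(1)^{\oplus (n+1)}$ and $V$ becomes an honest closed subscheme of $\mathbb{P}^n$ with Hilbert polynomial $\phi(t)$, the required $m_0$ is the usual Castelnuovo--Mumford regularity (Gotzmann) bound for $\mathbb{P}^n$, and the recovery of $V$ is the usual statement that an $m$-regular subscheme is cut out by the degree-$m$ piece of its saturated ideal; both are among the facts of \cite[Lecture 8]{MR0209285}.

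Next I would fix $m\geq m_0$ and form the relative Grassmannian $G=\mathrm{Grass}_r(\pi_*(\mathcal{Q}^{\otimes m}))$ of locally free rank-$r$ quotients, with $r=\phi(m)\cdot \mathrm{rk}(\mathcal{Q}^{\otimes m})$; this is a projective $S$-scheme since $\pi_*(\mathcal{Q}^{\otimes m})$ is coherent on $S$. Sending $V\in H^{\phi(t)}_{\mathscr{X}/S}(T)$ to the quotient $\pi_*(\mathcal{Q}^{\otimes m})_T\twoheadrightarrow \pi_*(\mathcal{O}_V\otimes\mathcal{Q}^{\otimes m})$ then defines a natural transformation from $H^{\phi(t)}_{\mathscr{X}/S}$ to $\mathrm{Hom}_S(-,G)$, which by the previous paragraph is injective on $T$-points. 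To realize $H^{\phi(t)}_{\mathscr{X}/S}$ as a locally closed subscheme of $G$, I would pull back the universal quotient on $G$ to $\mathscr{X}_G$, produce by adjunction a universal candidate quotient $\mathcal{O}_{\mathscr{X}_G}\to \mathcal{F}$, and apply Lemma \ref{lem: exist} to $\mathcal{F}$ on $\mathscr{X}_G/G$ to obtain the locally closed subscheme $G_{\phi(t)}\subset G$ over which the associated subscheme is flat with reduced Hilbert polynomial $\phi(t)$; properness over the base is automatic from properness of $\mathscr{X}/S$. By Remark \ref{rem: rhilbsplit} and the étale-local nature of flatness and of the reduced Hilbert polynomial, a $T$-point of $G$ lies in $G_{\phi(t)}$ exactly when it arises from an element of $H^{\phi(t)}_{\mathscr{X}/S}(T)$, so $\mathrm{Hilb}^{\mathrm{tw}}_{\phi(t)}(\mathscr{X}/S):=G_{\phi(t)}$ represents the functor. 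The universal subscheme $\mathrm{Univ}^{\mathrm{tw}}_{\phi(t)}(\mathscr{X}/S)$ is then the restriction of the universal family to $\mathscr{X}\times_S G_{\phi(t)}$, and the identification of $\mathrm{Hom}_S(T,-)$ with $H^{\phi(t)}_{\mathscr{X}/S}(T)$, together with the pullback description of $V$, is a formal consequence of representability.

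The main obstacle is not any single deep input, since each ingredient is classical over $\mathbb{P}^n$, but rather the bookkeeping needed to phrase the entire construction using only the globally defined Quillen bundle $\mathcal{Q}$, so that everything descends from $S'$ to $S$ without appeal to an effectivity criterion for descent. Concretely, the delicate points are (i) checking that $\pi_*(\mathcal{Q}^{\otimes m})$ is locally free of the expected rank and that its formation is compatible with the base changes used, so that $G$ is a well-behaved $S$-scheme, and (ii) confirming that the recovery of a subscheme from a quotient of $\pi_*(\mathcal{Q}^{\otimes m})$ is compatible with étale descent, i.e.\ that the families constructed over the splitting glue to the single universal family over $\mathscr{X}_G$. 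Both statements reduce, via the splitting, to the corresponding facts for $\mathbb{P}^n$ in \cite[Lecture 8]{MR0209285} and \cite[Theorem I.1.6]{MR1440180}, which is precisely why only minor changes to the projective-bundle construction are required.
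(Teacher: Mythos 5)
Your overall strategy is the paper's: embed the functor into a relative Grassmannian of quotients of the pushforward of a globally defined twist built from the Quillen bundle, then carve out the representing scheme with the flattening stratification of Lemma \ref{lem: exist}, checking everything \'etale locally against \cite[Lecture 8]{MR0209285} and \cite{MR1440180}. The one substantive difference is your choice of twisting sheaf. You work with $\mathcal{Q}^{\otimes m}$ and the Grassmannian of rank $\phi(m)\cdot\mathrm{rk}(\mathcal{Q}^{\otimes m})$ quotients of $\pi_*(\mathcal{Q}^{\otimes m})$; the paper instead uses the line bundle $\mathcal{L}=(\det\mathcal{Q})^{\otimes N/(n+1)}$ (with $N$ divisible by $n+1$), which restricts to $\mathcal{O}(N)$ on a splitting cover, and the Grassmannian of rank $\phi(N)$ quotients of $\pi_*\mathcal{L}$. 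The determinant trick buys a genuine simplification at exactly the point your write-up glosses over: the step ``produce by adjunction a universal candidate quotient $\mathcal{O}_{\mathscr{X}_G}\to\mathcal{F}$'' does not work as stated in your setting. Adjunction applied to the universal subbundle $\mathcal{U}\subset\pi_*(\mathcal{Q}^{\otimes m})_G$ yields a map into $\mathcal{Q}^{\otimes m}$, hence a quotient of $\mathcal{Q}^{\otimes m}$, and since $\mathcal{Q}^{\otimes m}$ has rank $(n+1)^m>1$ you cannot untwist to a quotient of the structure sheaf: tensoring the cokernel with $(\mathcal{Q}^{\otimes m})^{\vee}$ produces a quotient of $\mathcal{E}nd(\mathcal{Q}^{\otimes m})$, whose reduced Hilbert polynomial is $(n+1)^{2m}\phi(t)$ rather than $\phi(t)$ and which does not directly define a closed subscheme. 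To repair this you must extract the ideal sheaf as the image of $\pi^{*}\mathcal{U}\otimes(\mathcal{Q}^{\otimes m})^{\vee}$ under the evaluation pairing $\mathcal{Q}^{\otimes m}\otimes(\mathcal{Q}^{\otimes m})^{\vee}\to\mathcal{O}$, and verify \'etale locally (using global generation of $\mathcal{I}_V(m)$ for $m$ at least the regularity bound) that this image is exactly $\mathcal{I}_V$ at points arising from the functor. This is fixable bookkeeping, but it is a real gap in the argument as written; in the paper's version, $\mathcal{L}$ is a line bundle, so the cokernel of $p_2^*\mathcal{U}\to p_1^*\mathcal{L}$ untwists by $\mathcal{L}^{\vee}$ directly to a quotient of $\mathcal{O}_{\mathscr{X}\times_S\mathscr{Y}}$ and the classical argument goes through verbatim. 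The remaining points of your outline (uniform regularity after splitting, descent of the flattening stratum, and the mutual inverseness of the two natural transformations) match the paper's proof.
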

	
	\begin{proof}
		The proof is essentially the same as in \cite[Theorem I.1.4]{MR1440180}. The only change that needs to be made, taking Lemma \ref{lem: exist} into account, is that one realizes the Hilbert scheme $\mathbf{Hilb}_{\phi(t)}^{tw}(\mathscr{X}/S)$ embedded in the Grassmannian $S$-bundle $\mathscr{Y}=\mathbf{Gr}_S(\phi(N), \pi_*\mathcal{L})$ of rank $\phi(N)$ quotient bundles of the locally free $\pi_*\mathcal{L}$, where $\mathcal{L}=(\det\mathcal{Q})^{\otimes N/(n+1)}$ and $N>0$ is an integer divisible by $n+1$ such that $h^i(V,\mathcal{O}_V(N))=0$ for any subscheme $V\subset \mathbb{P}^n$ with Hilbert polynomial $\phi(t)$.
	\end{proof}
	
	\begin{defn}
		We'll call $\mathbf{Hilb}_{\phi(t)}^{\mathrm{tw}}(\mathscr{X}/S)$ \textit{the Hilbert scheme of $\mathscr{X}/S$} that parameterizes subschemes with reduced Hilbert polynomial $\phi(t)$. The superscript $\mathrm{tw}$ is a reminder that this is a twist of one of the usual Hilbert schemes of a projective bundle as the next remark notes.
	\end{defn}
	
	\begin{rmk}
		If $\mathscr{X}/S$ is split, i.e.\ if $\mathscr{X}/S$ is a projective bundle $\mathbb{P}_S(\mathcal{E})$ for some vector bundle $\mathcal{E}$ on $S$, then the above theorem recovers the usual Hilbert scheme $\mathbf{Hilb}_{\phi(t)}(\mathbb{P}_S(\mathcal{E})/S)$. This also shows the following statement: if $\mathscr{X}/S$ is any Severi--Brauer scheme over a Noetherian base scheme $S$, and if $S'/S$ is an fppf cover splitting $\mathscr{X}/S$, then there are splitting isomorphisms \[\mathbf{Hilb}_{\phi(t)}^{\mathrm{tw}}(\mathscr{X}/S)\times_S S'\cong \mathbf{Hilb}_{\phi(t)}(\mathscr{X}_{S'}/S')\] as claimed in the beginning of this section. Consequently, the scheme $\mathbf{Hilb}_{\phi(t)}^{\mathrm{tw}}(\mathscr{X}/S)$ inherits any property of $\mathbf{Hilb}_{\phi(t)}(\mathscr{X}_{S'}/S')$ that can be checked fppf locally on the base, i.e.\ being finite-type, proper, or smooth over $S$ holds if it also does over $S'$.
	\end{rmk}
	
	\begin{rmk}\label{rmk: compare}
		Given any Severi--Brauer scheme $\mathscr{X}/S$ with structure map $\pi:\mathscr{X}\rightarrow S$, it follows from \cite[\href{https://stacks.math.columbia.edu/tag/01VR}{Tag 01VR}]{stacks-project} that $\mathcal{L}=\det \mathcal{Q}$ is a $\pi$-relatively very ample line bundle. Hence $\pi$ is projective with respect to $\mathcal{L}$ and for any polynomial $\phi(t)\in \mathbb{Q}[t]$ there is a usual Hilbert scheme $\mathbf{Hilb}_{\phi(t)}(\mathscr{X}/S)$ parametrizing flat and proper subschemes of $\mathscr{X}$ whose Hilbert polynomial with respect to $\mathcal{L}$ is $\phi(t)$. If $\mathscr{X}$ has constant relative dimension $n-1$ over $S$, then there is an isomorphism \[\mathbf{Hilb}^{tw}_{\phi(t)}(\mathscr{X}/S)\cong \mathbf{Hilb}_{\phi(nt)}(\mathscr{X}/S)\] where, on the right, $\phi(nt)$ is taken with respect to $\mathcal{L}$.
		
		However, there are some benefits to the construction of $\mathrm{Hilb}_{\phi(t)}^{tw}(\mathscr{X}/S)$. (For example, the twisted and usual Hilbert scheme are both realized as subschemes of certain projective bundles; however, the relative codimension of the twisted Hilbert scheme will always be much lower than that of the usual one under these embeddings). 
	\end{rmk}
	
	The infinitesimal theory of $\mathbf{Hilb}_{\phi(t)}^{\mathrm{tw}}(\mathscr{X}/S)$ can also be checked on an fppf cover of the base, so we get the following corollary using the fact that the scheme $\mathbf{Hilb}_{\phi(t)}^{\mathrm{tw}}(\mathscr{X}/S)$ is fppf locally, e.g.\ on a cover $S'/S$ splitting $\mathscr{X}/S$, isomorphic to $\mathbf{Hilb}_{\phi(t)}(\mathbb{P}_{S'}^n/S')$.
	
	\begin{cor}\label{cor: obs}
		Let $\mathscr{X}/S$ be a Severi--Brauer scheme over $S$. Let $s\in S$ be a point, let $F$ be a field, and let $p:\mathrm{Spec}(F)\rightarrow s$ be a morphism. Let $V\subset \mathscr{X}_F$ be a subscheme with ideal sheaf $\mathcal{I}_V$ and reduced Hilbert polynomial $\mathrm{rh}_V(t)=\phi(t)$. Then the following are true:
		
		\begin{enumerate}[leftmargin=*]
			\item\label{cor: obs1} The Zariski tangent space of $\mathbf{Hilb}_{\phi(t)}^{\mathrm{tw}}(\mathscr{X}_F/F)$ at the $F$-point given by $V$ via Theorem \ref{thm: twhilb} is naturally isomorphic to \[\mathrm{Hom}_{\mathcal{O}_{\mathscr{X}_F}}(\mathcal{I}_V,\mathcal{O}_V)=\mathrm{Hom}_{\mathcal{O}_V}(\mathcal{I}_V/\mathcal{I}_V^2,\mathcal{O}_V).\]
			\item\label{cor: obs2} If $S$ is equidimensional at $s$, then the dimension of every irreducible component of $\mathbf{Hilb}_{\phi(t)}^{\mathrm{tw}}(\mathscr{X}/S)$ at the $F$-point defined by $V$ is at least \[\mathrm{dim}_F \mathrm{Hom}_{\mathcal{O}_{\mathscr{X}_F}}(\mathcal{I}_V,\mathcal{O}_V)-\mathrm{dim}_F \mathrm{Ext}^1_{\mathcal{O}_{\mathscr{X}_F}}(\mathcal{I}_V,\mathcal{O}_V)+\mathrm{dim}_s S.\]
			\item\label{cor: obs3} If both $S$ is equidimensional at $s$ and if $V\subset \mathscr{X}_F$ is (fppf) locally unobstructed, then the dimension of every irreducible component of $\mathbf{Hilb}_{\phi(t)}^{\mathrm{tw}}(\mathscr{X}/S)$ at any point in the image of the point defined by $V$ is at least \[\mathrm{dim}_F \mathrm{Hom}_{\mathcal{O}_V}(\mathcal{I}_V/\mathcal{I}_V^2,\mathcal{O}_V)-\mathrm{dim}_F \HH^1(V,\mathcal{H}om(\mathcal{I}_V/\mathcal{I}_V^2,\mathcal{O}_V))+\mathrm{dim}_s S.\]
		\end{enumerate}
		Moreover, in either of the cases (\ref{cor: obs2}) or (\ref{cor: obs3}) above, if the lower bound given for the dimension is equal to the dimension of every irreducible component of $\mathbf{Hilb}^{\mathrm{tw}}_{\phi(t)}(\mathscr{X}/S)$ at the point defined by $V$, then the map \[\mathbf{Hilb}^{\mathrm{tw}}_{\phi(t)}(\mathscr{X}/S)\rightarrow S\] is a local complete intersection morphism at that point.
	\end{cor}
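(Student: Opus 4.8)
The plan is to deduce every assertion from the classical infinitesimal study of the usual Hilbert scheme of projective space, transported through the \'etale-local isomorphism $\mathrm{Hilb}_{\phi(t)}^{\mathrm{tw}}(\mathscr{X}/S)\times_S S'\cong \mathrm{Hilb}_{\phi(t)}(\mathbb{P}^n_{S'}/S')$ of the preceding remark, and through flat base change to relate the intrinsic $\mathrm{Hom}$, $\mathrm{Ext}$, and cohomology groups on $\mathscr{X}_F$ to the split ones. For (\ref{cor: obs1}) I would compute the tangent space straight from the functor of Theorem \ref{thm: twhilb}: it is the fibre over $[V]$ of $H^{\phi(t)}_{\mathscr{X}_F/F}(\mathrm{Spec}\,F[\epsilon])\to H^{\phi(t)}_{\mathscr{X}_F/F}(\mathrm{Spec}\,F)$, i.e.\ the flat lifts of $V$ to $\mathscr{X}_{F[\epsilon]}$. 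The deformation theory of a closed subscheme is Zariski-local on the ambient space and so is blind to the twisting; it reproduces the standard identification of first-order embedded deformations with $\mathrm{Hom}_{\mathcal{O}_{\mathscr{X}_F}}(\mathcal{I}_V,\mathcal{O}_V)$. The second equality is the local fact that an $\mathcal{O}_{\mathscr{X}_F}$-linear map $\mathcal{I}_V\to\mathcal{O}_V$ kills $\mathcal{I}_V^2$, hence factors through the $\mathcal{O}_V$-module $\mathcal{I}_V/\mathcal{I}_V^2$.

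For (\ref{cor: obs2}) I would use the relative obstruction theory of the Hilbert scheme. \'Etale-locally the structure map is $\mathrm{Hilb}_{\phi(t)}(\mathbb{P}^n_{S'}/S')\to S'$, and near $[V]$ it is cut out, inside a scheme smooth over $S'$ of relative dimension $\dim_F\mathrm{Hom}_{\mathcal{O}_{\mathscr{X}_F}}(\mathcal{I}_V,\mathcal{O}_V)$, by an ideal generated by at most $\dim_F\mathrm{Ext}^1_{\mathcal{O}_{\mathscr{X}_F}}(\mathcal{I}_V,\mathcal{O}_V)$ elements (the tangent and obstruction spaces of the deformation problem, cf.\ \cite{MR1440180}). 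Since dropping the dimension by at most the number of equations gives the asserted bound $\dim\mathrm{Hom}-\dim\mathrm{Ext}^1+\dim_s S$, I only need that the intrinsic $\mathrm{Hom}$ and $\mathrm{Ext}^1$ on the proper $\mathscr{X}_F$ agree with those computed over a splitting field $F'/F$; this is flat base change for $\mathrm{Ext}$ of coherent sheaves, which also shows the bound is insensitive to the field extension and to the \'etale cover used.

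For (\ref{cor: obs3}) I would feed the local-to-global spectral sequence $\HH^p(V,\mathcal{E}xt^q_{\mathcal{O}_{\mathscr{X}_F}}(\mathcal{I}_V,\mathcal{O}_V))\Rightarrow \mathrm{Ext}^{p+q}_{\mathcal{O}_{\mathscr{X}_F}}(\mathcal{I}_V,\mathcal{O}_V)$ into the estimate of (\ref{cor: obs2}). Local unobstructedness is precisely the vanishing of the obstruction sheaf $\mathcal{E}xt^1_{\mathcal{O}_{\mathscr{X}_F}}(\mathcal{I}_V,\mathcal{O}_V)$; together with the identity $\mathcal{H}om_{\mathcal{O}_{\mathscr{X}_F}}(\mathcal{I}_V,\mathcal{O}_V)=\mathcal{H}om_{\mathcal{O}_V}(\mathcal{I}_V/\mathcal{I}_V^2,\mathcal{O}_V)$ underlying (\ref{cor: obs1}), this collapses the low-degree terms to $\mathrm{Ext}^1_{\mathcal{O}_{\mathscr{X}_F}}(\mathcal{I}_V,\mathcal{O}_V)\cong \HH^1(V,\mathcal{H}om(\mathcal{I}_V/\mathcal{I}_V^2,\mathcal{O}_V))$ and $\mathrm{Hom}_{\mathcal{O}_{\mathscr{X}_F}}(\mathcal{I}_V,\mathcal{O}_V)=\HH^0(V,\mathcal{H}om(\mathcal{I}_V/\mathcal{I}_V^2,\mathcal{O}_V))$. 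Substituting into (\ref{cor: obs2}) gives the formula; the bound holds at the image point over all of $S$ because vanishing of the obstruction sheaf is a fibral condition that governs obstructions along the base.

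Finally, for the local complete intersection statement, I would observe that when the lower bound is attained on every component the defining equations of (\ref{cor: obs2}) --- at most $\dim\mathrm{Ext}^1$ of them, inside the scheme smooth over $S'$ --- must form a regular sequence of exactly that length, for otherwise the dimension would strictly exceed the bound. Hence the local model is a relative complete intersection over $S'$, so $\mathrm{Hilb}_{\phi(t)}(\mathbb{P}^n_{S'}/S')\to S'$ is a local complete intersection morphism at the relevant point, and the same holds for $\mathrm{Hilb}_{\phi(t)}^{\mathrm{tw}}(\mathscr{X}/S)\to S$ since the property is \'etale-local on the base. The main obstacle I anticipate is not a single hard idea but careful bookkeeping: one must consistently match the intrinsic $\mathrm{Hom}$/$\mathrm{Ext}$ groups on the possibly nonsplit $\mathscr{X}_F$ with their reductions over $\mathbb{P}^n$ (verifying flat base change both along the splitting field of $\mathscr{X}_F$ and along the \'etale cover $S'/S$), and one must upgrade the over-a-field infinitesimal statements of \cite{MR1440180} to the relative form over $S$, so that the $\dim_s S$ summand and the relative local complete intersection conclusion are genuinely justified.
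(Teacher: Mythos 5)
Your proposal is correct and takes essentially the same route as the paper, whose proof simply observes that the infinitesimal theory of $\mathrm{Hilb}^{\mathrm{tw}}_{\phi(t)}(\mathscr{X}/S)$ can be checked on an \'etale cover splitting $\mathscr{X}/S$ and then cites \cite[Theorems I.2.10 and I.2.15]{MR1440180}; your sketch amounts to reproving those two theorems together with the descent step. The one small caveat is that ``locally unobstructed'' in \cite[Definition I.2.11]{MR1440180} refers to unobstructedness of local embedded deformations rather than literally to the vanishing of the sheaf $\mathcal{E}xt^1_{\mathcal{O}_{\mathscr{X}_F}}(\mathcal{I}_V,\mathcal{O}_V)$, but this conflation does not affect the argument.
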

	
	\begin{proof}
		This is a combination of \cite[Theorems I.2.8, I.2.10, I.2.15]{MR1440180}. See \cite[Definition I.2.11]{MR1440180} for the definition of locally unobstructed subschemes.
	\end{proof}

	\section{Generic geometrically elliptic normal curves}\label{sec: ggenc}
	From now on, we work in the following setting: we fix a base field $k$, a $k$-central simple $k$-algebra $A$, and we let $X=\SB(A)$ be the associated Severi--Brauer variety of $A$. We use the triple $(d,n,m)$ to refer to the degree, index, and exponent of $A$ respectively, i.e.\ \[d=\mathrm{deg}(A),\quad n=\mathrm{ind}(A), \quad m=\mathrm{exp}(A).\]
	Write \begin{equation}\psi_X:\mathbf{Univ}^{\mathrm{tw}}_{\phi(t)}(X/k)\rightarrow \mathbf{Hilb}^{\mathrm{tw}}_{\phi(t)}(X/k)
	\end{equation} for the canonical map coming from the projection. (By slight abuse of notation, we use the same $\psi_X$ regardless of the function $\phi(t)$ under consideration). For each irreducible component $V\subset \mathbf{Hilb}^{\mathrm{tw}}_{\phi(t)}(X/k)$ we let $\eta_V$ denote the generic point of $V$. If $\phi(t)=rt+s$ is linear then, for each such $V$, the generic fiber $\psi_X^{-1}(\eta_V)$ is the union of a curve and a finite number points.
	
	Of particular interest is the following component of $\mathbf{Hilb}^{\mathrm{tw}}_{rt}(X/k)$ for any integer $r\geq 1$ such that $n$ divides $r$.
	
	\begin{defn}
		Let $\mathrm{Ell}_{r}(X)\subset \mathbf{Hilb}^{\mathrm{tw}}_{rt}(X/k)$ denote the union of the irreducible components $V$ of $\mathbf{Hilb}^{\mathrm{tw}}_{rt}(X/k)$ whose generic fiber $\psi_X^{-1}(\eta_V)$ is a smooth and geometrically connected curve of genus $1$.
	\end{defn}
	
	If either $\mathrm{dim}(X)=2$ and $r=3$, or if $\mathrm{dim}(X)\geq 3$ and $r\geq 3$ is an arbitrary integer, then the scheme $\mathrm{Ell}_r(X)$ is nonempty. Indeed, this can be checked over an algebraic closure $\overline{k}$ of $k$. It then follows from the two facts: \begin{enumerate}[leftmargin=*]
\item a smooth cubic curve in $\mathbb{P}^2_{\overline{k}}$ is an elliptic curve, and
\item $\mathbb{P}^3_{\overline{k}}$ (and so also $\mathbb{P}^n$ for $n\geq 3$) contains smooth elliptic curves of every degree $r\geq 3$.
\end{enumerate}
	
	\begin{prop}\label{prop: elln}
		Suppose $A$ is a central simple $k$-algebra of degree $d$ and of index $n$. Then the following are true:
		\begin{enumerate}[leftmargin=*]
			\item\label{thm: ellnirr} $\mathrm{Ell}_d(X)$ is geometrically irreducible with $\mathrm{dim}(\mathrm{Ell}_d(X))=d^2$;
			\item\label{thm: ellnmax} if $A$ has division and either $A$ is cyclic or, if $A$ contains a maximal subfield $F\subset A$ whose Galois closure $E/k$ is a Galois extension of degree $2n$ with dihedral Galois group, then $\mathrm{Ell}_n(X)(k)\neq\emptyset$.
		\end{enumerate}
	\end{prop}
	
	\begin{proof}
		We first prove (\ref{thm: ellnmax}). In either case, let $x$ be a point of $X$ with $k(x)$ either a cyclic Galois extension $E/k$ of $k$ of degree $n$ (in the first case) or a maximal subfield $k(x)\subset A$ with Galois closure $E/k$ a dihedral Galois extension of degree $2n$ (in the second case). The field $E$ splits $X$ and $k(x)\otimes_k E\cong E^{\oplus n}$ either way. Let $H\subset \mathrm{Gal}(E/k)$ be a cyclic subgroup of order $n$. Pick an $E$-rational point $p$ in $x_E$ and let $L$ be the line through $p$ and $gp$ for any generator $g$ of $H$. 
		
		The union of the $H$-translates of $L$ forms a $\mathrm{Gal}(E/k)$-orbit which descends to a scheme $V\subset X$ defined over $k$. Geometrically, the scheme $V_{\overline{k}}$ is an $n$-gon of lines through the points $x_{\overline{k}}$. Hence $\mathrm{rh}_V(t)=nt$. We claim the point defined by $V$ in $\mathbf{Hilb}_{nt}^{\mathrm{tw}}(X/k)$ is contained in $\mathrm{Ell}_n(X)$. 
		
		Actually, as $V_{\overline{k}}$ is the scheme-theoretic union of lines we can use the exact sequence \cite[\href{https://stacks.math.columbia.edu/tag/0C4J}{Tag 0C4J}]{stacks-project} \begin{equation}\label{eq: union} 0\rightarrow \mathcal{O}_{C\cup D}\rightarrow \mathcal{O}_C\oplus \mathcal{O}_D\rightarrow \mathcal{O}_{C\cap D}\rightarrow 0
		\end{equation} where $V_{\overline{k}}=C\cup D$, with $C$ a chain of $n-1$ lines and $D$ a line closing the $n$-gon, to compute that $h^1(V,\mathcal{O}_V)=1$ and that $h^1(V_{\overline{k}},\mathcal{O}_{V_{\overline{k}}}(1))=0$ by tensoring the exact sequence with $\mathcal{O}_{X_{\overline{k}}}(1)$. Since $V_{\overline{k}}$ has lci singularities, one can apply \cite[Proposition 29.9]{MR2583634} to find that $V_{\overline{k}}$ is smoothable.
		
		More precisely, we find that $\mathbf{Hilb}^{\mathrm{tw}}_{nt}(X/k)$ is smooth at the $k$-rational point defined by $V\subset X$ and, over an algebraic closure, there is an integral curve passing through both the point corresponding to $V_{\overline{k}}\subset X_{\overline{k}}$ and the subset of $\mathrm{Ell}_n(X_{\overline{k}})$ parametrizing smooth and connected curves. In particular, the embedding $V\subset X$ defines a point of $\mathrm{Ell}_n(X)(k)$ completing the proof of (\ref{thm: ellnmax}).	
		
		Now we prove (\ref{thm: ellnirr}). If $d=3$, then $\mathbf{Hilb}^{\mathrm{tw}}_{3t}(X/k)$ is isomorphic to $\mathbb{P}^9$.
		So we can assume $d>3$. Then $\mathrm{Ell}_d(X)$ is geometrically irreducible by \cite[Theorem 8]{MR875083}. The dimension of $\mathrm{Ell}_d(X)$ can also be determined geometrically. Essentially, if $C\subset X_{\overline{k}}$ is smooth of degree $d$ and genus 1 then one can compute \[h^0(C,\mathcal{N}_{C/X_{\overline{k}}})=d^2 \quad \mbox{and}\quad h^1(C,\mathcal{N}_{C/X_{\overline{k}}})=0\] using the normal bundle sequence (and the Euler sequence for $X_{\overline{k}}$). This shows both that $\mathrm{dim}(\mathrm{Ell}_d(X))\leq d^2$, from Corollary \ref{cor: obs} (\ref{cor: obs1}), and that $\mathrm{dim}(\mathrm{Ell}_d(X))\geq d^2$, from Corollary \ref{cor: obs} (\ref{cor: obs3}).
	\end{proof}
	
	\begin{rmk}\label{rmk: star}
		The proof of (\ref{thm: ellnmax}) in Proposition \ref{prop: elln} above is an extension of an argument due to Jason Starr, cf.\ \cite{286458}. There Starr's goal is to use the fact that $V$ defines a smooth $k$-rational point on $\mathbf{Hilb}^{\mathrm{tw}}_{nt}(X/k)$ to construct a smooth genus 1 curve on any Severi--Brauer variety $X$ defined over a large (also called ample) field $k$ (e.g.\ a $p$-special field or the fraction field of a Henselian DVR).
		
		We can elaborate on Starr's result in the setting of Proposition \ref{prop: elln}, i.e.\ when $A$ is a division $k$-algebra satisfying the assumptions of (\ref{thm: ellnmax}). Indeed, the scheme $\mathrm{Ell}_n(X)$ is projective so we can construct a smooth curve $E$ with a $k$-rational point mapping to the $k$-point $x$ associated to the $n$-gon $V$ constructed in the proof of Proposition \ref{prop: elln} (\ref{thm: ellnmax}) as follows.
		
		Let $y$ be any point of $\mathrm{Ell}_n(X)$ whose associated subscheme $C\subset X$ is a smooth geometrically connected curve of genus 1. Let $I=\{x,y\}$. Consider the blowup $\mathrm{Bl}_I(\mathrm{Ell}_n(X))$ with center the points $I\subset \mathrm{Ell}_n(X)$. Since $\mathrm{Ell}_n(X)$ is projective, there is some embedding of the blowup $\mathrm{Bl}_I(\mathrm{Ell}_n(X))\subset \mathbb{P}^{M}$. A general linear section of the correct codimension intersects $\mathrm{Bl}_I(\mathrm{Ell}_n(X))$ in a curve (smooth near $x$) by Bertini's theorem \cite[Th\'eor\`eme 6.10 et Corollaire 6.11]{MR725671}. A general section of the same codimension intersects the exceptional divisor $\mathbb{P}^{n^2-1}\subset \mathrm{Bl}_I(\mathrm{Ell}_n(X))$ over $x$ in a $k$-rational point and the exceptional divisor over $y$ in some number of points. So we can choose a section $E'\subset \mathrm{Bl}_I(\mathrm{Ell}_n(X))$ doing all three things at once. The normalization $E$ of $E'$ is a curve with all the stated properties.
		
		Over a large (also called ample) field $k$, any irreducible curve having a smooth $k$-rational point has infinitely many $k$-rational points. Thus the curve $E$ has infinitely many $k$-rational points and the image along the composition of the normalization and blowdown \[E\rightarrow E'\rightarrow  \mathrm{Bl}_I(\mathrm{Ell}_n(X))\rightarrow \mathrm{Ell}_n(X)\] has nontrivial intersection with the open subset of $\mathrm{Ell}_n(X)$ consisting of smooth and geometrically connected genus 1 curves.
	\end{rmk}
	
	\begin{exmp}\label{exmp: gen}
		If $A$ is a cyclic division $k$-algebra of index $n$, there are lots of field extensions $F/k$ where $X_F$ contains a smooth geometrically connected curve of genus 1 and where the algebra $A_F$ has index $n$. When $n=p^r$ is a power of a prime $p$, Remark \ref{rmk: star} shows this holds for a minimal $p$-special field $F/k$ contained in an algebraic closure $\overline{k}/k$. When the index $n$ is arbitrary one can instead use the field $k((t))$, which is the fraction field of a Henselian DVR, and apply Remark \ref{rmk: star}. The index remains $n$ here since $A_{k((t))}$ specializes to $A$ (Lemma \ref{lem: spec}).
		
		One can also construct ``generic" examples for an arbitrary division algebra $A$ of index $n$ as follows. First, one can use \cite{MR2533621} to find a field extension $F/k$ with $A_F$ cyclic of index $n$ and such that the restriction map $\mathrm{Br}(k)\rightarrow \mathrm{Br}(F)$ is an injection. Setting $L=F(\mathrm{Ell}_n(X_F))$ and noting that $\mathrm{Ell}_n(X_F)$ has a smooth $F$-rational point corresponding to the $n$-gon above, we have that the restriction $\mathrm{Br}(F)\rightarrow \mathrm{Br}(L)$ is an injection by \cite[Lemma 5.4.7]{MR3727161}. Now Lemma \ref{lem: spec} below shows that $A_L$ remains index $n$ since $A_L$ specializes to $A_F$. Hence also:
\begin{enumerate}[leftmargin=*]
\item the extension of $A$ to $E=k(\mathrm{Ell}_n(X))$ has index $n$, 
\item the restriction map $\mathrm{Br}(k)\rightarrow \mathrm{Br}(E)$ is injective,
\item and $X_E$ contains the ``generic" smooth and geometrically connected curve of genus $1$ of $X$.
\end{enumerate}
	\end{exmp}
	
	\begin{exmp}\label{exmp: genfam}
		Let $n\geq 3$ be an integer and fix a divisor $m\geq 1$ of $n$. Set $G=\mathrm{SL}_n/\mu_m$ to be the quotient of the special linear group by the sub-group scheme of $m$th roots of unity. Fix a faithful representation $G\rightarrow \mathrm{GL}_N$ for some $N\gg0$ and let $\pi:\mathrm{GL}_N\rightarrow \mathrm{GL}_N/G$ be the quotient. If $P\subset G$ is a parabolic subgroup such that $P\backslash G\cong \mathbb{P}^{n-1}$, then $\pi$ is equivariant for the right-action of $P$ and the quotient by this action yields a Severi--Brauer scheme $\pi_0:P\backslash \mathrm{GL}_N\rightarrow \mathrm{GL}_N/G$. One can therefore consider the relative $\mathrm{GL}_N/G$-scheme $\mathbf{Hilb}_{nt}^{tw}(\pi_0)$ and, if $\eta$ is the generic point of the (smooth and geometrically irreducible) scheme $\mathrm{GL}_N/G$, we can define the relative $\mathrm{GL}_N/G$-scheme $\mathrm{Ell}_n(\pi_0)$ as the scheme theoretic closure of $\mathrm{Ell}_n(\pi_0\times_{\mathrm{GL}_N/G} \eta)$ inside $\mathbf{Hilb}_{nt}^{tw}(\pi_0)$.
		
		The scheme $\mathrm{Ell}_n(\pi_0)$ is proper and surjective over $\mathrm{GL}_N/G$ and, for any field extension $F/k$ and for any $F$-point $x\in(\mathrm{GL}_N/G)(F)$, the fiber $\mathrm{Ell}_n(\pi_0)\times_{\mathrm{GL}_N/G} x$ contains $\mathrm{Ell}_n(\pi_0\times_{\mathrm{GL}_N/G} x)$ as a closed subscheme. By \cite[\href{https://stacks.math.columbia.edu/tag/0559}{Tag 0559}]{stacks-project}, there is then an open subscheme $W\subset \mathrm{GL}_N/G$ such that for any $x\in W(F)$ there is an equality \[\mathrm{Ell}_n(\pi_0)\times_{\mathrm{GL}_N/G}x \cong \mathrm{Ell}_n(\pi_0\times_{\mathrm{GL}_N/G} x).\]
		
		If the base field $k$ is infinite, then the relative Severi--Brauer scheme $\pi_0$ is versal (cf.\ \cite[Ch.\ 1 \S5]{MR1999383}) in the sense that for any nonempty open subscheme $U\subset \mathrm{GL}_N/G$, for any field extension $F/k$, and for any Severi--Brauer variety $X$ associated to an $F$-central simple $F$-algebra $A$ with $\mathrm{deg}(A)=n$ and $\mathrm{exp}(A)$ dividing $m$, there exists an $F$-point $x\in U(F)$ so that $X\cong \pi_0^{-1}(U) \times_U x$. The scheme $\mathrm{Ell}_n(\pi_0)\times_{\mathrm{GL}_N/G} W$ and its universal family, considered over $W$, is similarly versal for geometrically elliptic normal curves on Severi--Brauer varieties.
		
		Moreover, using Example \ref{exmp: gen}, there exists a \textit{generic geometrically elliptic normal curve} $C^{gen}_{n,m}$ on the base extension $(X^{gen}_{n,m})_E$ of the generic Severi--Brauer variety $X^{gen}_{n,m}=\pi_0^{-1}(\eta)$, where $E$ is the function field of the scheme $\mathrm{Ell}_n(\pi_0\times_{\mathrm{GL}_N/G} \eta)$. Fix any field $F/k$, fix a point $x\in W(F)$ corresponding to a Severi--Brauer variety $X$, and fix a geometrically elliptic normal curve $C\subset X$. The point $s_x$ in $S=\mathrm{Ell}_n(\pi_0\times_{\mathrm{GL}_N/G} x)$ associated to the subscheme $C\subset X$ is geometrically regular. Hence there exists a sequence of DVRs $(R_0,\mathfrak{m}_0),...,(R_{j(s_x)},\mathfrak{m}_{j(s_x)})$ satisfying the following conditions:
		\begin{enumerate}[leftmargin=*]
			\item $\mathrm{Frac}(R_0)=F(\mathrm{Ell}_n(\pi_0)|_W\times_k F)=F(\mathrm{Ell}_n(\pi_0\times_k F)):=E'$,  
			\item $R_i/\mathfrak{m}_i\cong \mathrm{Frac}(R_{i+1})$
			\item $R_{j(s_x)}/\mathfrak{m}_{j(s_x)}\cong F(s_x)$.
		\end{enumerate} There are also smooth $\mathrm{Spec}(R_i)$-schemes, gotten by base change of the universal family, which at one end gives $C_{n,m}^{gen}\times_E E'$ and the other $C$. In this way the generic geometrically elliptic normal curve specializes to any other geometrically elliptic normal curve in any Severi--Brauer variety defined over any field extension of $k$.
	\end{exmp}
	
	Recall that the period $\mathrm{per}(C)$ of a smooth, proper, and geometrically integral curve $C/k$ is the smallest integer $m\geq 1$ so that $\mathbf{Pic}^m_{C/k}(k)\neq \emptyset$. Equivalently, the period of $C/k$ is the order of the element $[\mathbf{Pic}^1_{C/k}]$ inside the first Galois cohomology group $\mathrm{H}^1(k, \mathbf{Pic}^0_{C/k})$. We frequently use the fact that a $k$-rational point of $\mathbf{Pic}^m_{C/k}$ corresponding to a globally generated line bundle on $C_{\overline{k}}$ determines a morphism to a Severi--Brauer variety (and conversely), cf. \cite[Theorem 1.1]{MR3644253}.
	
	Recall also that the index $\mathrm{ind}(C)$ of $C$ is the unique positive integer generating the image of the degree map $\mathrm{deg}:\CH_0(C)\rightarrow \mathbb{Z}$. We have that $\mathrm{per}(C)$ divides $\mathrm{ind}(C)$ and if the genus of $C$ satisfies $g(C)=1$, then  $\mathrm{ind}(C)$ divides $\mathrm{per}(C)^2$, see \cite[Theorem 8]{MR0242831}. In the following theorem we keep the notation of Example \ref{exmp: genfam} (in particular, the base field $k$ is assumed to be infinite).
	
	\begin{thm}\label{thm: perind}
		Let $n\geq 3$ be an integer, and let $m> 1$ be a divisor of $n$ such that $n$ and $m$ have the same prime factors (i.e.\ $m\mid n\mid m^\infty$). Assume, additionally, that $n$ is not divisible by the characteristic of $k$.
		
		Then the generic geometrically elliptic normal curve $C^{gen}_{n,m}$ above has index $\mathrm{ind}(C^{gen}_{n,m})=nm$ and $\mathrm{per}(C^{gen}_{n,m})=n$.
	\end{thm}
	
	\begin{rmk}
		Let $A^{gen}_{n,m}$ be the central simple $k(\eta)$-algebra associated to the generic Severi--Brauer variety $X^{gen}_{n,m}$. If $n=st$ is a factorization by integers $s$ and $t$ such that $\mathrm{gcd}(t,m)=1$ and $s$ and $m$ share the same prime factors, then \[\mathrm{deg}(A^{gen}_{n,m})=n,\quad \mathrm{ind}(A^{gen}_{n,m})=s,\quad \mbox{and} \quad \mathrm{exp}(A^{gen}_{n,m})=m.\]
		So the assumptions on $n$ and $m$ in Theorem \ref{thm: perind} describe, equivalently, exactly those cases where $A^{gen}_{n,m}$ is a division algebra.
	\end{rmk}
	
	\begin{proof}
		We first deal with the case when $n=m$. Since $C^{gen}_{n,n}$ embeds as a geometrically elliptic normal curve in a Severi--Brauer variety of dimension $n-1$, we find $\mathrm{per}(C^{gen}_{n,n})$ divides $n$. To prove $\mathrm{per}(C^{gen}_{n,n})=n$, it therefore suffices to show $\mathrm{ind}(C^{gen}_{n,n})=n^2$.
		
		Because of our assumption that the characteristic of the base field $k$ does not divide $n$, we can find a field extension $F/k$ and a smooth, proper, and geometrically integral $F$-curve $C$ of genus $g(C)=1$ with $\mathrm{ind}(C)=n^2$ and $\mathrm{per}(C)=n$. (By the remarks at the end of \S4 in \cite{MR0106226}, one can take $F=\overline{k}((t_1))((t_2))$ for an algebraic closure $\overline{k}$ of $k$). After base extension from $k$ to $F$, it follows that $(C^{gen}_{n,n})_{E'}$ specializes (along a sequence of DVRs) to $C$ as above; here $E'$ is the function field of $\mathrm{Ell}_n(\pi_0\times_k F)$ as before. Hence, by \cite[Proposition 20.3 (a)]{MR1644323}, the index $\mathrm{ind}(C^{gen}_{n,n})$ is divisible by $n^2$ which implies that it actually is $n^2$.
		
		When $n\neq m$, we can similarly argue by specialization. In this case, we still have $\mathrm{per}(C^{gen}_{n,m})$ divides $n$ since $C^{gen}_{n,m}$ embeds in a Severi--Brauer variety of dimension $n-1$ as a geometrically elliptic normal curve. Since $n$ is indivisible by the characteristic of $k$, we can construct (see Lemma \ref{lem:eqperind} below) a smooth, proper, and geometrically integral curve $C$ over a field extension $F/k$ with $\mathrm{per}(C)=\mathrm{ind}(C)=n$. This curve $C$ embeds as a geometrically elliptic normal curve on the trivial Severi--Brauer variety $\mathbb{P}^{n-1}_F$, which is associated to a central simple $F$-algebra of degree $n$ and exponent exactly equalling $1$. Hence we can specialize $(C^{gen}_{n,m})_{E'}$ to $C$ along a sequence of DVRs; here $E'=F(\mathrm{Ell}_n(\pi_0\times_k F))$. 

As each relative curve that appears over a DVR in this process is projective, smooth, and has geometrically integral fibers, we can form their associated relative Picard schemes \cite[Theorem 4.8]{MR2223410}. In this way we can also specialize from $\mathbf{Pic}^d_{(C^{gen}_{n,m})_{E'}/{E'}}\cong \mathbf{Pic}^d_{C^{gen}_{n,m}/E}\times_E E'$, where we recall $E$ is the function field of $\mathrm{Ell}_n(\pi_0\times_{\mathrm{GL}_N/G} \eta)$, to $\mathbf{Pic}^d_{C/F}$, for each integer $d$ dividing $n$, along a sequence of DVRs. Since the period can only decrease when extending the base field, we can apply \cite[Proposition 20.3 (a)]{MR1644323} to show that $\mathrm{per}(C^{gen}_{n,m})=n$ as claimed.
		
		To compute the index of $C^{gen}_{n,m}$, we also use a specialization argument. Let $A$ be the central simple $E$-algebra corresponding to $(X^{gen}_{n,n})_E$ and let $X=\SB(A^{\otimes m})$. Since $(C^{gen}_{n,n})_{E(X)}$ sits on the Severi--Brauer variety $(X^{gen}_{n,n})_{E(X)}$, which is associated to the division algebra $A_{E(X)}$ of index $n$ and exponent $m$ by \cite[Theorem 2.1]{MR1061778}, we can specialize $(C^{gen}_{n,m})_{E'}$ to this curve along a sequence of DVRs; $E'=E(X)(\mathrm{Ell}_n(\pi_0\times_k E(X)))$. We show in Lemma \ref{lem: indcurve} below that the curve $(C^{gen}_{n,n})_{E(X)}$ has index $nm$. Thus, using \cite[Proposition 20.3 (a)]{MR1644323} again, we get $\mathrm{ind}(C^{gen}_{n,m})\geq nm$. However, it's possible to see that we must also have $\mathrm{ind}(C^{gen}_{n,m})\leq nm$ as we now explain.
		
		Indeed, if $B$ is the central simple $k(\eta)$-algebra associated to $X^{gen}_{n,m}$ then $B$ has index $n$ and exponent $m$. If $E=k(\mathrm{Ell}_n(\pi_0))$ is the given function field, then $(X^{gen}_{n,m})_E$ is associated to the algebra $B_E$ which still has index $\mathrm{ind}(B_E)=n$ and exponent $\mathrm{exp}(B_E)=m$ as the restriction $\mathrm{Br}(k(\eta))\rightarrow \mathrm{Br}(E)$ is an injection  (see Example \ref{exmp: gen}). If $H$ is a general divisor of $(X^{gen}_{n,m})_E$ of degree $\mathrm{exp}(B_E)=m$ then \[[C^{gen}_{n,m}\cap H]=[C^{gen}_{n,m}][H]=m[p]\] holds in $\CH_0((X^{gen}_{n,m})_E)$ for some point $p$ of degree $\mathrm{ind}(B_E)=n$; this is a direct computation, cf.\ \cite[Corollary 7.3]{MR2278759}. Now the left hand side of this equation has degree some multiple of the index of $C^{gen}_{n,m}$ whereas the right hand side has degree $nm$.\end{proof}
	
	We needed two lemmas for the above proof. The first of these lemmas constructs curves of equal period and index over an extension of $k$. The proof below is adapted from \cite{286395}.
	
	\begin{lem}\label{lem:eqperind}
		Let $n\geq 1$ be an integer not divisible by the characteristic of $k$. Let $\overline{k}$ be a fixed algebraic closure of $k$. Write $F=\overline{k}((t))$ for the field of formal Laurent series in $t$ over $\overline{k}$. Then there exists a smooth and proper genus one curve $C/F$ with $\mathrm{per}(C)=\mathrm{ind}(C)=n$.
	\end{lem}
	
	\begin{proof}
		Let $E/k$ be any elliptic curve. We claim that there exists an element $x\in \mathrm{H}^1(F,E_F)$ having exact order $n$. Using the correspondence between this Galois cohomology group and the Weil--Ch\^{a}telet group for $E_F$, the element $x$ corresponds to an $E_F$-torsor $C/F$ having period $n$. By \cite[Theorem 1]{MR0237506}, the curve $C$ also has index $n$.
		
		The Kummer sequence associated to the multiplication-by-$n$ map on $E_F$ yields the exact sequence \begin{equation}\label{eq:kumm} 0\rightarrow E_F(F)/nE_F(F)\rightarrow \mathrm{H}^1(F,E_F[n])\rightarrow\mathrm{H}^1(F,E_F)[n]\rightarrow 0\end{equation} where $E_F[n]$ is the subgroup scheme of $n$-torsion points of $E_F$. Since $n$ is not divisible by the characteristic of $k$, and since $E$ is defined over $k$, there exists an isomorphism of group schemes $E_F[n]\cong (\mathbb{Z}/n\mathbb{Z})^{\oplus 2}$. Since $F$ admits a cyclic Galois extension of degree $n$ (i.e.\ $F(t^{1/n})$), there exists an element $z\in\mathrm{H}^1(F,E_F[n])$ of exact order $n$.
		
		We claim that the group $E_F(F)/nE_F(F)=0$ so that, by (\ref{eq:kumm}), there exists an element $x$ of order $n$ as desired (the image of $z$, for example). This result seems to be well-known to experts, cf.\ \cite[Remark I.3.6]{MR881804}, but we include a proof here for completeness.

 Let $R=\overline{k}[[t]]$. The restriction $E_R(R)\rightarrow E_F(F)$ is an isomorphism due to the valuative criterion for properness, so it suffices to show that $E_R(R)$ is $n$-divisible. Since we have that $R=\varprojlim_m R/(t^m)$, there is an isomorphism \[\varprojlim_{m}E_R(R/(t^m)) \cong E_R(R).\] We'll show that each $E_{R}(R/(t^m))$ is $n$-divisible by induction on $m$. When $m=1$, the group $E_R(\overline{k})=E_{\overline{k}}(\overline{k})$ is $n$-divisible as $E$ is an elliptic curve. Now assume $E_R(R/(t^m))$ is $n$-divisible for some $m\geq 1$. From restriction we get an exact sequence \begin{equation}\label{eq: formsmooth}0\rightarrow V\rightarrow E_R(R/(t^{m+1}))\rightarrow E_R(R/(t^m))\rightarrow 0\end{equation} with surjectivity on the right from formal smoothness. Here the kernel $V$ is a $\overline{k}$-vector space which is $n$-divisible since the characteristic of $k$ doesn't divide $n$. It follows that $E_R(R/(t^{m+1}))$ is $n$-divisible as well.

Now tensoring (\ref{eq: formsmooth}) with $\mathbb{Z}/n\mathbb{Z}$ and considering the associated long exact sequence of $\mathrm{Tor}^{\mathbb{Z}}_*(\mathbb{Z}/n\mathbb{Z},-)$ shows that the canonical inverse system $\{E_R(R/(t^m))[n]\}_m$ of $n$-torsion subgroups is everywhere surjective. Taking limits of the inverse systems of the exact sequences \[0\rightarrow E_R(R/(t^m))[n]\rightarrow E_R(R/(t^m))\xrightarrow{\cdot n} E_R(R/(t^m))\rightarrow 0\] now shows that $E_R(R)$ is $n$-divisible, since the system $\{E_R(R/(t^m))[n]\}_m$ is Mittag-Leffler, cf.\ \cite[\href{https://stacks.math.columbia.edu/tag/0598}{Tag 0598}]{stacks-project}.
	\end{proof}
	
	The second lemma provides an index reduction formula for the generic curve $C_{n,n}^{gen}$.
	
	\begin{lem}\label{lem: indcurve}
		Let $n\geq 3$ be an integer not divisible by the characteristic of the base field $k$ and fix a divisor $m\geq 1$ of $n$ sharing the same prime factors as $n$ if $m>1$. Let $A$ be the central simple $E$-algebra associated to the Severi--Brauer variety $(X^{gen}_{n,n})_E$. Let $X=\SB(A^{\otimes m})$. 
		
		Then the generic geometrically elliptic normal curve $C^{gen}_{n,n}\subset (X_{n,n}^{gen})_E$ satisfies $\mathrm{ind}\left((C^{gen}_{n,n})_{E(X)}\right)=nm$. Moreover, if $n/m$ is squarefree, then the period of $(C^{gen}_{n,n})_{E(X)}$ is $\mathrm{per}\left((C^{gen}_{n,n})_{E(X)} \right)=n$.
	\end{lem}
	
	\begin{proof}
		Let $C=C^{gen}_{n,n}$ for the proof. Now there exists an exact sequence \begin{equation}\label{eq: P1} 0\rightarrow \mathrm{Pic}(C\times X)\rightarrow \mathbf{Pic}_{C\times X/E}(E)\xrightarrow{\delta} \mathrm{Br}(E)\end{equation} which can be obtained in multiple ways, see for example \cite[Proof of Theorem 2.1]{MR3009747} or \cite[Remark 2.11]{MR2223410}. Important for us are the facts that there is an equality \[\mathbf{Pic}_{C\times X/E}(E)=\mathrm{Pic}((C\times X)_{E^s})^{\mathrm{Gal}(E^s/E)},\] where $E^s$ is a separable closure of $E$, and that there is a geometric realization of the rightmost map of (\ref{eq: P1}), see e.g.\ \cite[Theorem 3.4]{MR3644253}. 
		
		Using the above equality, we can compute $\mathbf{Pic}_{C\times X/E}(E)$ explicitly. There is an exact sequence of $\mathrm{Gal}(E^s/E)$-modules \[0\rightarrow \mathrm{Pic}(C_{E^s})\times  \mathrm{Pic}(X_{E^s})\rightarrow \mathrm{Pic}((C\times X)_{E^s}) \rightarrow H\rightarrow 0\] where $H=\mathrm{Hom}(\left(\mathbf{Pic}^0_{X_{E^s}/E^s}\right)^\vee, \mathbf{Pic}^0_{C_{E^s}/{E^s}})$ and the leftmost nonzero map is the pullback along the two projections, see \cite[\S5.7.1]{MR4304038}. Note that, as $X$ is a Severi--Brauer variety, we have $H=0$ so that \[\mathbf{Pic}_{C\times X/E}(E)\cong (\mathrm{Pic}(C_{E^s})\times  \mathrm{Pic}(X_{E^s}))^{\mathrm{Gal}(E^s/E)} \cong \mathbf{Pic}_{C/E}(E)\times \mathbb{Z}\] where a generator in the second component is given by the class of the line bundle $\mathcal{O}(1)$ on $X_{E^s}\cong \mathbb{P}_{E^s}^{n-1}$. Note that $\mathcal{O}(1)$ maps to $[A^{\otimes m}]$ in the Brauer group $\mathrm{Br}(E)$ under the map $\delta$.
		
		Suppose an $E$-rational point in $\mathbf{Pic}_{C\times X/E}(E)$ is given geometrically by the pair $x=\left(\mathcal{L}, \mathcal{O}(-\ell) \right)$. Then $x$ comes from a line bundle on $C\times X$ only if its image in $\mathrm{Br}(E)$ is trivial, i.e.\ if there is an equality \[0=\delta(\mathcal{L})-[A^{\otimes m\ell}].\] Since $C$ has an $E$-rational divisor of degree $n^2$, whose image in $\mathrm{Br}(E)$ is trivial, we can translate such an $x$ to a pair where the first component has degree $0<d=\mathrm{deg}(\mathcal{L})\leq n^2$. We can even assume $d>1$ since $C\cong \mathbf{Pic}^1_{C/E}$ has no $E$-rational points.
		
		Since $C$ has genus $g(C)=1$, any line bundle representing the point $\mathcal{L}$ on $\mathbf{Pic}_{C/E}^d(E)$ is globally generated and thus defines a morphism \[\varphi:C\rightarrow P\] where $P$ is a Severi--Brauer variety with class $[P]=\delta(\mathcal{L})=[A^{\otimes m\ell}]$ in $\mathrm{Br}(E)$ by \cite[Theorem 3.4]{MR3644253}. For a general Weil divisor $D$ on $P$ of degree $e=\mathrm{exp}(A^{\otimes m \ell})$, the zero-cycle $[\varphi^{-1}(D)]\in \mathrm{CH}_0(C)$ then has degree $de$ considered as a Weil divisor of $C$. Since $\mathrm{ind}(C)=n^2$ by the first two paragraphs of Theorem \ref{thm: perind}, we must have $n^2$ divides $de$. Since $\mathrm{exp}(A^{\otimes m\ell})$ divides $n/m$, we get \[n^2 \mid de \mid d\left(\frac{n}{m}\right).\] Hence $nm$ divides $d$.
		
		Now there is a commutative box (all faces commute)
		\[\begin{tikzcd}[row sep=scriptsize, column sep=scriptsize]
			& \mathrm{Pic}(C\times X) \arrow[dl, twoheadrightarrow] \arrow[rr] \arrow[dd] & & \mathrm{Pic}((C\times X)_{E^s}) \arrow[dl, twoheadrightarrow] \arrow[dd] \\
			\mathrm{Pic}(C_{E(X)})\arrow [rr, crossing over] \arrow[dd] & & \mathrm{Pic}((C_{E^s})_{E^s(X_{E^s})}) \\
			& \CH^0(X) \arrow[dl, equals] \arrow[rr, equals] & & \CH^0(X_{E^s}) \arrow[dl, equals] \\
			\mathbb{Z} \arrow[rr, equals] & & \mathbb{Z} \arrow[from=uu, crossing over]\\
		\end{tikzcd}\]
		where all vertical arrows are pushforward morphisms, all other arrows are pullbacks, and we've identified \[\CH_0(\mathrm{Spec}(E(X)))=\mathbb{Z}=\CH_0(\mathrm{Spec}(E^s(X_{E^s}))),\] see \cite[Proposition 1.7]{MR1644323}. By localization \cite[Corollary 57.11]{MR2427530}, all slanted arrows are surjective and the bottom square is trivially all isomorphisms.
		
		If $\mathcal{L}_0$ is a line bundle on $C_{E(X)}$ we can therefore lift it to a line bundle on $C\times X$ which, over the separable closure $E^s/E$ is of the form $x=(\mathcal{L},\mathcal{O}(-\ell))$ for a line bundle $\mathcal{L}$ on $C_{E^s}$ with $\mathrm{deg}(\mathcal{L})$ a multiple of $nm$ and for some $\ell\in \mathbb{Z}$. By pushing forward to $X_{E^s}$ and restricting to $E^s(X_{E^s})$, we see that $\mathrm{deg}(\mathcal{L}_0)=\mathrm{deg}(\mathcal{L})$ is a multiple of $nm$. Conversely, taking $m$-times the point of $\mathbf{Pic}^n_{C/E}$ corresponding to the embedding $C\subset X^{gen}_{n,n}$ defines a degree $nm$ line bundle on $C_{E(X)}$. Hence $\mathrm{ind}(C_{E(X)})=nm$.
		
		Finally, assume that $n/m$ is squarefree. If the period of $C_{E(X)}$ is $d$, then $d$ divides $n$ since the period of $C$ was $n$. If $d\neq n$, then there is a prime $p$ so that $v_p(d)\leq v_p(n)-1$ where $v_p$ is the $p$-adic valuation. Now we have $d$ divides $nm$ which divides $d^2$ by the period/index relations. However, if $n/m$ is squarefree then $v_p(m)\geq v_p(n)-1$ so that \[v_p(nm)=v_p(n)+v_p(m)\geq 2v_p(n)-1\] while $v_p(d^2)=2v_p(d)\leq 2v_p(n)-2$. Hence $d=n$.
	\end{proof}
	
	\appendix
	\section{On Azumaya algebras}
	\begin{lem}\label{lem: spec}
		Let $R$ be a Noetherian regular local ring with maximal ideal $\mathfrak{m}$, residue field $k=R/\mathfrak{m}$, and fraction field $F$. Suppose that $A$ is an Azumaya $R$-algebra. Then there is an inequality $\mathrm{ind}(A_k)\leq\mathrm{ind}(A_F)$.
	\end{lem}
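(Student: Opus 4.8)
The plan is to reduce to the case where $R$ is a discrete valuation ring and then argue geometrically using the Severi--Brauer scheme $\SB(A)\to \mathrm{Spec}(R)$, reading off the index from the length of a specialized point. The reduction is routine commutative algebra; the geometric core is the DVR case, and the step I expect to require the most care is the finiteness of the relevant closure over the base.

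For the reduction I would induct on $d=\dim R$. When $d=0$ there is nothing to prove since $R=F=k$. When $d\geq 1$, choose $t\in \mathfrak{m}\setminus\mathfrak{m}^2$. Then $R_1=R/tR$ is again regular local, of dimension $d-1$, with fraction field $F_1=\mathrm{Frac}(R/tR)$ and residue field $k$; in particular $R_1$ is a domain, so $(t)$ is a height one prime and the localization $R_{(t)}$ is a discrete valuation ring with fraction field $F$ and residue field $F_1$. Applying the DVR case to the Azumaya algebra $A\otimes_R R_{(t)}$ gives $\mathrm{ind}(A_{F_1})\leq \mathrm{ind}(A_F)$, while the inductive hypothesis applied to the Azumaya $R_1$-algebra $A\otimes_R R_1$ gives $\mathrm{ind}(A_k)\leq \mathrm{ind}(A_{F_1})$; chaining these yields the claim.

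It then remains to treat the case that $R$ is a discrete valuation ring with uniformizer $t$. I would set $X=\SB(A)$, a smooth and proper $R$-scheme whose fibers $X_F$ and $X_k$ are the Severi--Brauer varieties of $A_F$ and $A_k$, and write $n=\mathrm{ind}(A_F)$. Since $n$ is the minimal degree of a closed point of $X_F$ --- a separable splitting field $L/F$ of degree $n$ produces a closed point $z\in X_F$ whose residue field $k(z)$ both splits $A_F$ and has degree dividing $n$, forcing $[k(z):F]=n$ (see, e.g., \cite{MR3727161}) --- I may fix such a point $z$ and let $Z\subset X$ be its scheme-theoretic closure. The scheme $Z$ is integral and proper over $R$, dominant with $0$-dimensional generic fiber, so $\dim Z=1$; hence the special fiber $Z_k=V(t)\cap Z$ is a proper closed subscheme of the one-dimensional integral scheme $Z$ and is therefore $0$-dimensional. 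Thus $Z\to \mathrm{Spec}(R)$ is proper and quasi-finite, hence finite, and $Z=\mathrm{Spec}(B)$ with $B$ a finite torsion-free --- hence free --- $R$-module of rank $[k(z):F]=n$. This finiteness, i.e.\ the fact that the special fiber does not acquire positive dimension, is the delicate point, and it is exactly where the integrality and one-dimensionality of $Z$ are used.

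Finally I would read off the bound. The special fiber $Z_k=\mathrm{Spec}(B/tB)$ is a length-$n$ closed subscheme of $X_k=\SB(A_k)$, so its associated zero-cycle has degree $\dim_k(B/tB)=n$. Because the index of a central simple algebra divides the degree of every closed point of its Severi--Brauer variety --- equivalently, the degree map $\CH_0(X_k)\to \mathbb{Z}$ has image $\mathrm{ind}(A_k)\mathbb{Z}$ --- it follows that $\mathrm{ind}(A_k)\mid n$, and therefore $\mathrm{ind}(A_k)\leq n=\mathrm{ind}(A_F)$, as desired.
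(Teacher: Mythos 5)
Your proof is correct, and while your reduction to the one-dimensional case matches the paper's (the paper takes a regular system of generators of $\mathfrak{m}$ and produces the same chain of DVRs linking $F$ to $k$; your induction on $\dim R$ with $t\in\mathfrak{m}\setminus\mathfrak{m}^2$ is that chain unwound one step at a time), your treatment of the DVR case is genuinely different. The paper works with the generalized Severi--Brauer schemes $X_m=\mathbf{SB}_m(A)$, invokes the criterion that $\mathbf{SB}_m(A_L)(L)\neq\emptyset$ if and only if $\mathrm{ind}(A_L)\mid m$, and then simply applies the valuative criterion of properness to push an $F$-point of $X_n$ (for $n=\mathrm{ind}(A_F)$) to a $k$-point of $(X_n)_k$. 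You instead stay on the ordinary Severi--Brauer scheme $\SB(A)$, take the closure $Z$ of a degree-$n$ closed point of the generic fiber, prove $Z$ is finite and free of rank $n$ over $R$, and specialize the resulting zero-cycle; the divisibility $\mathrm{ind}(A_k)\mid n$ then comes from the fact that every closed point of $\SB(A_k)$ has degree divisible by the index. The paper's route is shorter once one grants the rational-point criterion for forms of Grassmannians, and it transparently handles all $m$ at once; yours needs the proper-plus-quasi-finite-implies-finite step and the torsion-free-over-a-DVR-implies-free step, but it uses only the classical facts about $\SB(A)$ itself and in effect re-proves the specialization of zero-cycles by hand. Both arguments deliver the stronger conclusion $\mathrm{ind}(A_k)\mid\mathrm{ind}(A_F)$, not merely the stated inequality. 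One small point worth making explicit in your write-up: the existence of a closed point $z\in X_F$ with $[k(z):F]=n$ uses that a division algebra of index $n$ contains a separable maximal subfield, which is where separability enters; after that your degree count $[k(z):F]\mid n$ and $n\mid [k(z):F]$ is exactly right.
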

	
	\begin{proof}
		We consider the $R$-schemes $X_m=\mathbf{SB}_m(A)$, of left (or right) ideal summands of $A$ of reduced dimension $m$, which are \'etale forms of the Grassmannian $R$-schemes $\mathbf{Gr}_R(m,n)$, where $n$ is the square root of the rank of $A$, and for varying $m$. The $F$ and $k$ fibers of the structure map over $R$ are canonically \[\mathbf{SB}_m(A_F)\cong \mathbf{SB}_m(A)\times_R F \quad\mbox{and}\quad \mathbf{SB}_m(A_k)\cong\mathbf{SB}_m(A)\times_R k,\] which have an $F$-rational point, or a $k$-rational point respectively, if and only if the index $\mathrm{ind}(A_F)$, or $\mathrm{ind}(A_k)$ respectively, divides $m$ \cite[Proposition 3]{doi:10.1080/00927879108824131}. We'll show that the assumption $R$ is regular guarantees that $\mathbf{SB}_m(A_k)(k)\neq \emptyset$ whenever $\mathbf{SB}_m(A_F)(F)\neq \emptyset$.
		
		For this, we first note that $R$ admits a sequence of discrete valuation rings $R_0,...,R_t$ with maximal ideals $\mathfrak{m}_0,...,\mathfrak{m}_t$ for some $t\geq 0$ with the following properties:
		\begin{enumerate}[leftmargin =*]
			\item $\mathrm{Frac}(R_0)=F$,  
			\item $R_i/\mathfrak{m}_i\cong \mathrm{Frac}(R_{i+1})$
			\item $R_t/\mathfrak{m}_t\cong k$.
		\end{enumerate}
		One can take a regular sequence $(a_0,...,a_{t-1})$ of generators for $\mathfrak{m}$ and define $R_i=(R/(a_0,...,a_{i-1}))_{(a_i)}$ (cf.\ \cite[\href{https://stacks.math.columbia.edu/tag/00NQ}{Tag 00NQ}, \href{https://stacks.math.columbia.edu/tag/0AFS}{Tag 0AFS}]{stacks-project}). Now the valuative criterion for properness \cite[Theorem 4.7]{MR213368} shows \[(X_m)_{R_i}(\mathrm{Frac}(R_i))\neq \emptyset \implies (X_m)_{R_{i+1}}(\mathrm{Frac}(R_{i+1}))\neq \emptyset.\] One can conclude by induction.
	\end{proof}
	
	\begin{exmp}
		The assumption that $R$ is regular cannot be dropped from the statement of Lemma \ref{lem: spec}. Here's an example from \cite[\S4]{MR4321613}. Fix a field $k$. Let $X/k$ be any Severi--Brauer variety having $X(k)=\emptyset$. Let $x\in X$ be a closed point. Consider the pushout $\tilde{X}$ in the cocartesian diagram below.
		\[\begin{tikzcd}x\arrow{r}\arrow{d} &\mathrm{Spec}(k)\arrow{d}\\ X\arrow{r} &\tilde{X}\end{tikzcd}\] Let $\tilde{x}\in \tilde{X}$ denote the canonical (singular) $k$-rational point of $\tilde{X}$ and $\mathcal{O}_{\tilde{X},\tilde{x}}$ the local ring. If $A$ is the central simple algebra associated to $X$, then the Azumaya algebra $A\otimes_k \mathcal{O}_{\tilde{X},\tilde{x}}$ is split over the generic point and nontrivial over the closed point by construction.
	\end{exmp}
	
	\bibliographystyle{amsalpha}
	\bibliography{bib}
\end{document}